\newcommand{\eid}{\stackrel{d}{=}}
\newcommand{\one}{{\bf 1}}
\newcommand{\reals}{{\mathbb R}}
\newcommand{\bbr}{\reals}
\newcommand{\vep}{\varepsilon}
\newcommand{\bz}{{\bf z}}
\newcommand{\bw}{{\bf w}}
\newcommand{\btheta}{{\boldsymbol \theta}}
\newcommand{\BX}{{\bf X}}
\newcommand{\cP}{{\mathcal P}}
\newtheorem{theorem}{Theorem}[section]
\newtheorem{remark}{Remark}
   \newtheoremstyle{example}{\topsep}{\topsep}%
     {}
     {}
     {\bfseries}
     {}
     {\newline}
     {\thmname{#1}\thmnumber{ #2}\thmnote{ #3}}
   \theoremstyle{example}
\newtheorem*{example}{Example}
\def\Var{{\rm Var}}
\def\E{{\rm E}}
\def\P{{\rm P}}
\numberwithin{equation}{section}
\begin{document}

\title[High minima of non-smooth Gaussian processes]
{High minima of non-smooth Gaussian processes}

\author{Zhixin Wu}
\address{Shanghai Jiatong University \\
 800 Dongchuan RD \\
 Minhang District \\
 Shanghai, China}
\email{wuzhixin@sjtu.edu.cn}

\author{Arijit Chakrabarty}
\address{Theoretical Statistics and Mathematics Unit \\
Indian Statistical Institute \\
203 B.T. Road\\
Kolkata 700108, India}
\email{arijit.isi@gmail.com}

\author{Gennady Samorodnitsky}
\address{School of Operations Research and Information Engineering\\
and Department of Statistical Science \\
Cornell University \\
Ithaca, NY 14853}
\email{gs18@cornell.edu}

\thanks{Chakrabarty's research was partially supported by the MATRICS grant of the the Science and Engineering Research Board, Government of India.
Samorodnitsky's research   was partially supported by the ARO
grant  W911NF-18 -10318  at Cornell University.}

\subjclass{Primary 60G15, 60F10.  Secondary 60G70.}
\keywords{ Gaussian process, high excursions, minima
\vspace{.5ex}}

\begin{abstract}
In this short note we study the asymptotic behaviour of the minima
over compact intervals  of Gaussian processes, whose paths are not
necessarily smooth. We show that, beyond the logarithmic large deviation
Gaussian estimates, this problem is closely related to the
classical small-ball problem. Under certain conditions we estimate the
term describing the correction to the large deviation behaviour. In
addition, the asymptotic distribution of the location of the minimum,
conditionally on the minimum exceeding a high threshold, is also studied.
\end{abstract}

\maketitle

\section{Introduction}
\label{sec:intro}

Let $\BX =\bigl( X(t),\, t\in\bbr\bigr)$ be a centered Gaussian
process with continuous sample paths. For a compact subinterval
$[a,b]$ of the real line we are interested in the right tail of the
random variable $\min_{a\leq t\leq b}X(t)$. This is a complicated
object; see e.g.   \cite{guliashvili:tankov:2016} and
\cite{adler:moldavskaya:samorodnitsky:2014}. On the logarithmic scale,
however, this tail can be described as follows: 
\begin{equation} \label{e:log.tail}
\lim_{u\to\infty} \frac{1}{u^2} \log P\bigl( \min_{a\leq t\leq
  b}X(t)>u\bigr) = -\frac{1}{2\sigma_*^2(a,b)}\,,
\end{equation}
where
\begin{equation} \label{e:optimize}
  \sigma_*^2(a,b) =
  \min_{\nu\in M_1[a,b]}\int_{[a,b]}\int_{[a,b]}R_\BX(s,t)\,\nu(ds)\,\nu(dt)\,,
\end{equation}
with $R_\BX$ the covariance function of the process 
and   $M_1[a,b]$  the set of all Borel probability measures $\nu$
on $[a,b]$; see Theorem 5.1 in
\cite{adler:moldavskaya:samorodnitsky:2014}. The quantity in \eqref{e:optimize} is strictly positive whenever the tail probability in \eqref{e:log.tail} is strictly positive for  $u=0$. In order to obtain more
precise results on the right tail of the minimum than
\eqref{e:log.tail}, additional assumptions on the process $\BX$, in
addition to its continuity, are needed. In
\cite{chakrabarty:samorodnitsky:2018} such additional assumptions
guarantee that the process $\BX$ is very smooth. Under these
assumptions the optimization problem \eqref{e:optimize} has a unique
optimal solution, a probability measure $\nu_*$ whose support is a
finite set. If $k$ is the cardinality of that set, then (under a
non-degeneracy assumption),
\begin{equation} \label{e:power.corr}
  P\bigl( \min_{a\leq t\leq   b}X(t)>u\bigr)  \sim cu^{-k} \exp\left\{
    -\frac{1}{2\sigma_*^2(a,b)}u^2\right\}
\end{equation}
for some $c\in (0,\infty)$.

Our goal in this paper is to obtain results on the asymptotics of the
right tail of the Gaussian minimum, more precise than the logarithmic
asymptotics \eqref{e:log.tail}, when the process $\BX$ is not so
smooth as to satisfy the assumptions of
\cite{chakrabarty:samorodnitsky:2018} (and, hence, also
\eqref{e:power.corr}). Such more precise asymptotics are, clearly,
related to the support of the optimal measure in \eqref{e:optimize},
so the next Section \ref{sec:support} describes certain situations
where information on the optimal measure or, at least, on its support,
is available. The more precise asymptotic results on the tail of the
minima are in Section  \ref{sec:minima}; the results are the most
precise in the Markovian case. In Section \ref{sec:location} we show
that, in many cases, the law of the location of the minimum of a
non-smooth Gaussian process, given that the minimum is high,
converges, as the height of the minimum increases, to the minimizer in
the optimization problem \eqref{e:optimize}. We conclude with examples
in Section \ref{sec:examples}.

\section{The optimal measure and its support}
\label{sec:support}

When a Gaussian process is very smooth, optimal measures in the
optimization problem \eqref{e:optimize} are supported by finite sets;
see \cite{chakrabarty:samorodnitsky:2018}. On the other hand,
processes whose sample paths are sufficiently ``rough'' may lead to optimal
measures with large supports, For example, if $\BX$ is the stationary
Ornstein-Uhlenbeck process, with covariance function
$R_\BX(s,t)=\exp\{-|s-t|\}$, then the optimal measure in
\eqref{e:optimize}  is
$$
\nu_*=\frac{1}{2+b-a}\delta_{a}+\frac{1}{2+b-a}\delta_{b}+ \frac{b-a}{2+b-a}\lambda_{a,b}\,,
$$
where $\delta_x$ is a point mass at $x$, and $\lambda_{a,b}$ is the
uniform probability distribution on the interval $(a,b)$; see Example
6.2 in \cite{adler:moldavskaya:samorodnitsky:2014}. In this case the
optimal measure has a full support in the interval $[a,b]$. We now
demonstrate other situations where this phenomenon holds.

We start with considering certain stationary Gaussian processes, in
which case we will use the standard single variable notation for the
covariance function $R_\BX(t):=R_\BX(s,s+t)$, $s,t\in\bbr$. By
stationarity it is enough to take $a=0$ and consider intervals of the
type $[0,b]$, $b>0$. 

\begin{theorem} \label{t:convex.R}
Let $\BX =\bigl( X(t),\, t\in\bbr\bigr)$ be a centered stationary Gaussian
process with continuous sample paths and covariance function
$R_\BX$. Suppose that  $R_\BX$ is strictly convex on
$[0,b]$. Then the optimization problem \eqref{e:optimize} has a unique
optimal probability measure, which has a full support in the
interval $[0,b]$. 
\end{theorem}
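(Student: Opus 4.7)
My plan is to treat existence, uniqueness, and full support of the optimizer in sequence, exploiting throughout the quadratic structure of $F(\nu):=\int_{[0,b]}\int_{[0,b]}R_\BX(|s-t|)\,\nu(ds)\,\nu(dt)$ and the strict convexity hypothesis on $R_\BX$. Existence of a minimizer $\nu_*$ follows from weak compactness of $M_1[0,b]$ and weak continuity of $F$, and a Lagrangian first-order calculation on admissible signed-measure variations gives that the potential $h(s):=\int R_\BX(|s-t|)\,\nu_*(dt)$ satisfies $h\ge\lambda:=F(\nu_*)$ on $[0,b]$ with equality on $S:=\operatorname{supp}(\nu_*)$.

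The first geometric step is to rule out any interior gap in $S$. If $(c,d)\subset[0,b]\setminus S$ with $c,d\in S$, then $\nu_*((c,d))=0$ and for every $t\in S$ either $t\le c$ or $t\ge d$, so $s\mapsto|s-t|$ is affine on $[c,d]$. By strict convexity of $R_\BX$ on $[0,b]$, $s\mapsto R_\BX(|s-t|)$ is then strictly convex on $[c,d]$ for each such $t$; integrating against $\nu_*$ gives that $h$ itself is strictly convex on $[c,d]$. But $h(c)=h(d)=\lambda$ and $h\ge\lambda$ throughout, which is impossible, so $S$ has no interior gaps.

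For uniqueness I would show that $F$ is strictly convex on $M_1[0,b]$, equivalently that $F(\mu)>0$ for every nonzero signed measure $\mu$ on $[0,b]$ of total mass zero. By Bochner's theorem $F(\mu)=\int|\hat\mu(\xi)|^2\,F_R(d\xi)$ where $F_R$ is the spectral measure of $R_\BX$, and since $\mu$ has compact support $\hat\mu$ is entire of exponential type at most $b$; uniqueness thus reduces to the statement that no nonzero entire function of exponential type $\le b$ can vanish on $\operatorname{supp}(F_R)$. The main obstacle of the proof is extracting sufficient spread on $F_R$ from strict convexity of $R_\BX$: for instance, a purely atomic $F_R$ with $\int\xi^2\,F_R(d\xi)<\infty$ would force $R_\BX''(0)=-\int\xi^2\,F_R(d\xi)\le 0$, incompatible with strict convexity at $0$, while the harder case $\int\xi^2\,F_R(d\xi)=\infty$ requires a more delicate Carlson-type argument keyed to the interval length $b$.

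Finally, stationarity together with uniqueness forces $\inf S=0$ and $\sup S=b$. Indeed, if $\inf S>0$, translating $\nu_*$ to the left by $\inf S$ would produce a measure $\nu_*'\in M_1[0,b]$ with $F(\nu_*')=F(\nu_*)=\lambda$ by stationarity, but $\operatorname{supp}(\nu_*')=S-\inf S\ne S$, contradicting uniqueness; a symmetric argument gives $\sup S=b$. Combined with the absence of interior gaps in $S$, this yields $S=[0,b]$.
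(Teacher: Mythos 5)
There is a genuine gap, and it sits exactly where you flag ``the main obstacle'': the uniqueness step. You correctly reduce uniqueness to showing that $\int|\hat\mu(\xi)|^2\,F_R(d\xi)>0$ for every nonzero signed measure $\mu$ of total mass zero supported in $[0,b]$, i.e.\ that no nonzero entire function of exponential type $\le b$ vanishes on $\operatorname{supp}(F_R)$ --- but you then leave the substantive case open. This cannot be waved away: a nonzero entire function of exponential type $\le b$ \emph{can} vanish on an arithmetic progression of spacing $2\pi/b$ (e.g.\ $\sin(bz/2)$), so you must actually show that strict convexity of $R_\BX$ on $[0,b]$ prevents $F_R$ from concentrating on such a sparse set. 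Your second-moment observation does not do this: it only excludes atomic spectral measures with $\int\xi^2\,F_R(d\xi)<\infty$, and convexity of $R_\BX$ already forces $R_\BX''(0^+)\ge 0$, hence $\int\xi^2\,F_R(d\xi)=\infty$ for any nonconstant covariance --- so the case you dispatch is empty and the case you defer is the whole problem. The paper's missing idea is Polya's theorem: strict convexity (hence strict monotonicity) of the covariance on $[0,b]$ implies that the spectral measure has an absolutely continuous component of \emph{full support} on $\bbr$, after which uniqueness is exactly the analyticity argument you have in mind (the Fourier transform of a compactly supported signed measure is entire, so it cannot vanish on a set of positive Lebesgue measure without vanishing identically), as established in Adler--Moldavskaya--Samorodnitsky.

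The gap propagates: your argument that $0,b\in\operatorname{supp}(\nu_*)$ (translate the whole optimizer and invoke uniqueness) is clean but rests entirely on the unproven uniqueness. The paper avoids this dependence by a direct variational argument: it shifts only the mass of $\nu_*$ lying in $[c-\vep,c]$ (where $c$ is the rightmost support point) to $[b-\vep,b]$ and uses that strict convexity forces $R_\BX$ to be strictly decreasing on $[0,b]$, so the cross term in the quadratic form strictly decreases, contradicting optimality without any appeal to uniqueness. Your interior-gap argument via strict convexity of the potential $h$ on a gap $(c,d)$ with $h(c)=h(d)=\lambda$ and $h\ge\lambda$ is exactly the paper's and is correct. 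So the proof is salvageable once you establish the fully supported absolutely continuous spectral component (via Polya) and, preferably, decouple the endpoint step from uniqueness as the paper does.
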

\begin{proof}
By Polya's theorem, the spectral measure of the process $\BX$ has an 
absolutely continuous component which is of full support on
$\bbr$; see e.g. \cite{lukacs:1970}. 
Then there is a unique optimal probability measure 
$\nu_*$ in  the optimization problem  \eqref{e:optimize}; see
\cite{adler:moldavskaya:samorodnitsky:2014}. Furthermore, the strict
convexity of the covariance function implies that it is strictly
decreasing on $[0,b]$.

Note that the support of the optimal probability measure $\nu_*$
cannot consist of a single point, for in that case the value
of the double integral in \eqref{e:optimize} is $R_\BX(0)$, while any
two-point probability measure $\nu$ would give a strictly smaller integral. 
We show now that endpoints
$0$ and $b$ of the interval  belong to the support. By symmetry it is
enough to prove that $b$ is in the support of $\nu_*$.

Suppose that, to the contrary, for some $0<\vep<b$ we have
$\nu_*\bigl( [b-\vep,b]\bigr)=0$, and let $c$ be the right-most point
of the support of $\nu_*$. Then $0<c\leq b-\vep$. Choosing, if
necessary, a smaller $\vep$ we can assure that $c> \vep$ and that
$\nu_*\bigl( [0,c-\vep)\bigr)>0$. Construct now a new probability
measure, $\hat\nu_*$ by translating the positive mass of $\nu_*$ in the
interval $[c-\vep,c]$ to the interval $[b-\vep,b]$. By the strict
monotonicity of the covariance function,
\begin{align*}
  &\int_{[0,b]}\int_{[0,b]}R_\BX(t-s)\,\hat\nu_*(ds)\, \hat\nu_*(dt) \\
    = &\int_{[0,c-\vep)}\int_{[0,c-\vep)}R_\BX(t-s)\,\nu_*(ds)\, \nu_*(dt) \\
        + &\int_{[c-\vep,c]}\int_{[c-\vep,c]}R_\BX(t-s)\,\nu_*(ds)\, \nu_*(dt) \\
  +2&  \int_{[c-\vep,c]}\int_{[0,c-\vep)} R_\BX(b-c+t-s)\,\nu_*(ds)\,
      \nu_*(dt) \\
   < &\int_{[0,c-\vep)}\int_{[0,c-\vep)}R_\BX(t-s)\,\nu_*(ds)\, \nu_*(dt) \\
        +& \int_{[c-\vep,c]}\int_{[c-\vep,c]}R_\BX(t-s)\,\nu_*(ds)\, \nu_*(dt) \\
  +2&  \int_{[c-\vep,c]}\int_{[0,c-\vep)} R_\BX(t-s)\,\nu_*(ds)\,
      \nu_*(dt) \\
   &=\int_{[0,b]}\int_{[0,b]}R_\BX(t-s)\,\nu_*(ds)\, \nu_*(dt)\,,
\end{align*}
contradicting the optimality of the measure $\nu_*$.

Hence, the endpoints of the interval are in the support of $\nu_*$,
and we proceed to prove that the support of $\nu_*$ is the entire
interval $[0,b]$. Suppose that, to the contrary, there are points
$0\leq c_1<c_2\leq b$, both in the support of $\nu_*$, such that
$\nu_*\bigl( (c_1,c_2)\bigr)=0$. Denote
$$
m(t) = \int_{[0,b]}R_\BX(t-s)\, \nu_*(ds), \ 0\leq t\leq b\,.
$$
The optimality of the measure $\nu_*$ implies that $m(t)\geq
\sigma_*^2(0,b)$ (the optimal value of the double integral in
\eqref{e:optimize}) for all $0\leq t\leq b$, with equality 
on the support of 
$\nu_*$; see Theorem 4.3 in
\cite{adler:moldavskaya:samorodnitsky:2014}. Note that on the interval
$[c_1,c_2]$ this function, 
$$
m(t) = \int_{[0,c_1]}R_\BX(t-s)\, \nu_*(ds) +
\int_{[c_2,b]}R_\BX(s-t)\, \nu_*(ds)\,, 
$$
is strictly convex by the assumptions. Since
$m(c_1)=m(c_2)=\sigma_*^2(0,b)$, this rules out the possibility that 
$m(t)\geq \sigma_*^2(0,b)$ for $c_1<t<c_2$.  The resulting
contradiction completes the proof of the theorem. 
\end{proof}

For certain nonstationary Gaussian processes the optimization problem
\eqref{e:optimize}  can be explicitly solved. Here is one such
situation. Let $\bigl( B(t),\, t\geq 0\bigr)$ be the standard Brownian
motion, and $0<a<b<\infty$. Consider a centered Gaussian process of
the form
\begin{equation} \label{e:BM.mod}
  X(t) = \frac{1}{g(t)}B(t), \ a\leq t\leq b\,,
\end{equation}
where $g:\, [a,b]\to (0,\infty)$ is a 
continuous function.

\begin{theorem} \label{t:BM.mod}
(a) \ Suppose that $g$ is a nondecreasing concave and twice
continuously differentiable function on $[a,b]$. Define
$$
f(x) = -g(x)g^{\prime\prime}(x)\geq 0, \ a<x<b\,,
$$
$$
p_a= \frac{g(a)}{a}\bigl( g(a) - ag^\prime (a)\bigr)\geq 0\,,
$$
$$
p_b=g(b)g^\prime(b)\geq 0\,.
$$
Then the finite measure $\mu$ on $[a,b]$ defined by
\begin{equation} \label{e:mu}
\mu(dx) = p_a\delta_a(dx) + p_b\delta_b(dx)
+ f(x)\, dx, \ a\leq x\leq b\,,
\end{equation} 
is equal, up to a multiplicative constant, to an optimal
solution to the optimization problem
\eqref{e:optimize}.

(b) Suppose that $g$ is concave on $[a,b]$, and nondecreasing 
and twice continuously differentiable  on $[a_0,b]$, for some
$a< a_0<b$ such that $g(a_0)=a_0g^\prime(a_0)$. 
If $p_b$ is as in part (a), and
$$
f(x) = -g(x)g^{\prime\prime}(x)\geq 0, \ a_0<x<b\,,
$$
then the finite
measure  $\mu$ on $[a,b]$ defined by
\begin{equation} \label{e:mu1}
\mu(dx) =  p_b\delta_b(dx)
+ f(x)\, dx, \ a_0\leq x\leq b\,,
\end{equation} 
is equal, up to a multiplicative constant, to an optimal
solution to the optimization problem
\eqref{e:optimize}.
\end{theorem}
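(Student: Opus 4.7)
The plan is to apply the optimality criterion already invoked in the proof of Theorem \ref{t:convex.R} (Theorem 4.3 of \ams): a probability measure $\nu_*$ on $[a,b]$ solves the minimization in \eqref{e:optimize} if and only if there is a constant $c$ such that
$$
m_{\nu_*}(t) := \int_{[a,b]} R_\BX(t,s)\,\nu_*(ds) \;\geq\; c \quad \text{for every } t\in[a,b],
$$
with equality on $\mathrm{supp}(\nu_*)$; one then has $c=\sigma_*^2(a,b)$. Since $X(t)=B(t)/g(t)$ gives $R_\BX(s,t)=\min(s,t)/(g(s)g(t))$, and the multiplicative normalization of $\mu$ is immaterial to this criterion, it is enough to compute $m(t) := \int R_\BX(t,s)\mu(ds)$ for $\mu$ as in \eqref{e:mu} or \eqref{e:mu1} and verify the criterion with $\nu_* = \mu/\mu([a,b])$.

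For part (a), I would split the integral defining $g(t)m(t)$ at $s=t$, use $f(s)/g(s) = -g''(s)$ on the diffuse part, and integrate by parts. Concretely, $\int_a^t sf(s)/g(s)\,ds = -tg'(t)+ag'(a)+g(t)-g(a)$ while $\int_t^b f(s)/g(s)\,ds = g'(t)-g'(b)$, and the contributions from the atoms are $p_a\cdot a/g(a)$ on the left and $t\cdot p_b/g(b)=tg'(b)$ on the right. The specific expressions chosen for $p_a$ and $p_b$ are precisely those that make all the boundary terms cancel, so one is left with $g(t)m(t) \equiv g(t)$, i.e.\ $m(t)\equiv 1$ on $[a,b]$. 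Since $\mathrm{supp}(\mu)\subseteq[a,b]$, the criterion is trivially met and the theorem follows in part (a), with $\sigma_*^2(a,b) = 1/\mu([a,b])$.

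For part (b), running the identical computation on $[a_0,b]$, with $p_{a_0}=g(a_0)-a_0g'(a_0)=0$ so that no left-endpoint atom is needed, again yields $m\equiv 1$ on $[a_0,b]$. What remains is to verify $m(t)\geq 1$ on $[a,a_0)$. For such $t$, every $s$ in $\mathrm{supp}(\mu)$ exceeds $t$, so $\min(s,t)=t$ and a short calculation gives $m(t)=tg'(a_0)/g(t)$; the inequality $g(t)\leq tg'(a_0)$ then says exactly that the tangent line to $g$ at $a_0$, which passes through the origin by the defining relation $g(a_0)=a_0g'(a_0)$, lies above the graph of $g$, and this is immediate from concavity. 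The main obstacle in the whole argument is the purely algebraic bookkeeping that makes the atomic masses $p_a$, $p_b$ cancel the boundary terms from the integration by parts; once that identity is established, the theorem drops out of the criterion together with the one-line concavity estimate above.
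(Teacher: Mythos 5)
Your proposal is correct and follows essentially the same route as the paper: both verify via the criterion of Theorem 4.3 of \cite{adler:moldavskaya:samorodnitsky:2014} that $\int R_\BX(s,t)\,\mu(ds)\equiv 1$ on the support of $\mu$ (by splitting at $s=t$ and integrating by parts so that the atoms $p_a,p_b$ cancel the boundary terms), and in part (b) that the integral equals $tg'(a_0)/g(t)\geq 1$ on $[a,a_0)$ by the tangent-line-through-the-origin consequence of concavity. The computations and the concavity estimate match the paper's in every essential detail.
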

\begin{proof}
  Observe that the covariance function of the process $\BX$ is given by
  $$
  R_\BX(s,t) =\frac{s}{g(s)g(t)}, \ a\leq s\leq t\leq b\,.
  $$
  With the measure $\mu$ defined by \eqref{e:mu},
  \begin{align*}
    &\int_{[a,b]} R_\BX(s,t)\,\mu(ds)= p_aR_\BX(a,t) + p_bR_\BX(b,t) +
      \int_a^b  R_\BX(x,t)f(x)\, dx \\
=& \frac{g(a)}{a}\bigl( g(a) - ag^\prime (a)\bigr) \frac{a}{g(a)g(t)} 
+ g(b)g^\prime(b) \frac{t}{g(b)g(t)} \\
-&\int_a^t \frac{x}{g(t)g(x)}g(x)g^{\prime\prime}(x)\, dx
- \int_t^b \frac{t}{g(t)g(x)}g(x)g^{\prime\prime}(x)\, dx \\
=& \frac{1}{g(t)} \left[ g(a)-ag^\prime(a) + tg^\prime (b) 
- \int_a^t xg^{\prime\prime}(x)\, dx - t \int_t^b
  g^{\prime\prime}(x)\, dx\right] \\
=& \frac{1}{g(t)} \left[ g(a) + \int_a^t g^\prime(x)\, dx \right]
   =1
  \end{align*}
for each $a\leq t\leq b$. By Theorem 4.3 in
\cite{adler:moldavskaya:samorodnitsky:2014} this implies  
the claim of part (a).

For part (b) note that by the above argument we already know that
\begin{equation} \label{e:partial.claim}
\int_{[a,b]} R_\BX(s,t)\,\mu_1(ds) = 1
\end{equation} 
for all $a_0\leq t\leq b$. Appealing, once again, to Theorem 4.3 in
\cite{adler:moldavskaya:samorodnitsky:2014} we see that the claim of
part (b) will follow once we check that the value of the integral in
\eqref{e:partial.claim} is at least 1 for $a\leq t<a_0$. For such $t$,
\begin{align*}
 &\int_{[a,b]} R_\BX(s,t)\,\mu_1(ds) =   p_bR_\BX(b,t) + \int_{a_0}^b R_\BX(x,t)f(x)\, dx \\
=& \frac{1}{g(t)} \left[ tg^\prime (b) - t \int_{a_0}^b
  g^{\prime\prime}(x)\, dx\right] \\
=& \frac{tg^\prime(a_0)}{g(t)}\,.
\end{align*}
Since  by concavity of $g$,
$$
g(a_0)-g(t) = \int_t^{a_0}g^\prime(x)\, dx \geq
g^\prime(a_0)(a_0-t)\,, 
$$
we conclude that
\begin{align*}
g(t)\leq g(a_0) -a_0 g^\prime(a_0) +
tg^\prime(a_0)=tg^\prime(a_0)\,,
\end{align*}
which gives the required lower bound on the integral of the covariance
function. 
  \end{proof} 
  \begin{remark}{\rm 
It is clear that the assumption of continuous second derivative of
the function $g$ in Theorem \ref{t:BM.mod} can be replaced by
the assumption of absolutely continuous first derivative, in which case the
function $g^{\prime\prime}$ in the statement of the theorem is simply
a nonpositive derivative of $g^\prime$  in the sense of
absolute continuity. }
    \end{remark}

    \section{Tails of the minima}
    \label{sec:minima}

    In this section we describe certain situations in which we can
    give more precise asymptotics of the tail of the minimum of a
    Gaussian process $\BX$ beyond the logarithmic asymptotics in
    \eqref{e:log.tail}. In these situations the smoothness assumptions
    of \cite{chakrabarty:samorodnitsky:2018} are not satisfied, and
    asymptotics of the type \eqref{e:power.corr} are no longer
    applicable. Our most precise results apply to Gaussian Markov
    processes, of which the processes of the type defined in
    \eqref{e:BM.mod} are a special case.

    \begin{theorem} \label{t5}
Let $(X(t), \, a\le t\le b)$ be a centered Gaussian Markov process
with continuous sample paths, such that  an optimal measure $\nu_*$
in the optimization problem \eqref{e:optimize} has an absolutely
continuous component $\nu_{ac}$, whose density with  respect to the
Lebesgue measure has a version with 
\begin{equation} \label{e:eta}
\eta:=\inf_{x\in[a,b]}\frac{d\nu_{ac}(x)}{dx}>0\,.
\end{equation}
Then
\begin{eqnarray*}
-\infty&<&\liminf_{u\to\infty}u^{-2/3}\left(\log
           P\left(\min_{a\leq t\leq b}X(t)>u\right)+
           \frac1{2\sigma_*^2(a,b)}u^2\right) \\
&\le&\limsup_{u\to\infty}u^{-2/3}\left(\log
      P\left(\min_{a\leq t\leq b}X(t)>u\right)+
      \frac1{2\sigma_*^2(a,b)}u^2\right)<0\,. 
\end{eqnarray*}
\end{theorem}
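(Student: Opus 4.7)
The plan is to reduce the tail of $\min_{[a,b]} \BX$ to a one-dimensional Laplace integral and then control the integral via matched two-sided small-ball bounds on a conditionally centered residual process. The hypothesis \eqref{e:eta} forces $\nu_*$ to have full support on $[a,b]$, so by Theorem~4.3 of \ams\ the function $t\mapsto\int_{[a,b]} R_\BX(s,t)\,\nu_*(ds)$ equals $\sigma_*^2(a,b)$ identically on $[a,b]$. Writing $\sigma_*:=\sigma_*(a,b)$, set $\xi:=\int_{[a,b]} X(t)\,\nu_*(dt)\sim N(0,\sigma_*^2)$ and $Z(t):=X(t)-\xi$. Then $E[Z(t)\xi]=0$, so $Z$ is independent of $\xi$; and $\int Z\,d\nu_*=0$ a.s., so $W:=-\min_{[a,b]} Z\ge 0$ almost surely.

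Conditioning on $\xi$ and integrating against its Gaussian density yields the exact identity
\begin{equation*}
P\Bigl(\min_{[a,b]}X>u\Bigr)=\frac{e^{-u^2/(2\sigma_*^2)}}{\sigma_*\sqrt{2\pi}}\,J(u),\qquad J(u):=\int_0^\infty F(w)\exp\!\Bigl\{-\frac{uw}{\sigma_*^2}-\frac{w^2}{2\sigma_*^2}\Bigr\}\,dw,
\end{equation*}
where $F(w):=P(W\le w)$. Taking logarithms reduces the theorem to proving the two-sided bound $-Cu^{2/3}\le \log J(u)\le -cu^{2/3}+O(\log u)$ for some constants $0<c\le C<\infty$.

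The heart of the argument is a two-sided small-ball estimate $\exp(-C_2/w^2)\le F(w)\le \exp(-c_1/w^2)$ for all sufficiently small $w>0$. The lower bound is immediate from the inclusion $\{\|Z\|_\infty\le w\}\subseteq\{W\le w\}$ together with the classical $L^\infty$ small-ball lower bound of order $\exp(-c/w^2)$ for one-dimensional centered Gaussian Markov processes on a compact interval (applied to the rank-one modification $Z=\BX-\xi$ of $\BX$). For the upper bound the density hypothesis \eqref{e:eta} is crucial: if $W\le w$ then $Z\ge -w$ on $[a,b]$, and $\int Z\,d\nu_*=0$ forces $\int Z^+\,d\nu_*=\int Z^-\,d\nu_*\le w$; since $\nu_*\ge\eta\,\lambda_{[a,b]}$, this yields $\int_a^b |Z|\,dx\le (\eta^{-1}+(b-a))\,w$. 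The probability that the Gaussian Markov process $\BX$ (shifted by the rank-one functional $\xi$) remains above $-w$ and has $L^1(dx)$-norm at most $C'w$ is then bounded above by $\exp(-c_1/w^2)$ via a dyadic partition of $[a,b]$ into $O(1/w^2)$ subintervals and iteration of transition-kernel estimates along the partition.

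With both small-ball bounds in hand, saddle-point analysis at $w^\ast\asymp u^{-1/3}$ completes the proof. For the lower bound, $J(u)\ge \epsilon F(\epsilon)e^{-2u\epsilon/\sigma_*^2-2\epsilon^2/\sigma_*^2}$ at $\epsilon\asymp u^{-1/3}$ gives $\log J(u)\ge -Cu^{2/3}$, so the $\liminf$ is finite. For the upper bound, discarding the non-negative $W^2$ term and using $F(w)\le\exp(-c_1/w^2)$ yields $J(u)\le \int_0^\infty e^{-c_1/w^2-uw/\sigma_*^2}\,dw$, which Laplace's method evaluates to $\exp(-cu^{2/3})$ up to polynomial factors, so the $\limsup$ is strictly negative. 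The main obstacle is the small-ball upper bound on $W$: the density hypothesis converts a $\nu_*$-mean-zero constraint on $Z$ into genuine $L^1$-smallness, and the Markov structure of $\BX$ is essential to obtain the sharp exponent $2$ in $\exp(-c_1/w^2)$, without which the correction in \eqref{e:log.tail} would be of a different order entirely.
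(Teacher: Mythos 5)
Your decomposition $X=\xi+Z$, the reduction of the tail to the one--dimensional Laplace integral $J(u)$, and the lower bound (a small-ball \emph{lower} bound of order $\exp(-C_2/w^2)$ fed into a Laplace/Tauberian estimate at the saddle $w\asymp u^{-1/3}$) all coincide with the paper's argument. The small imprecision in the lower bound --- $Z=\BX-\xi$ is a rank-one perturbation of $\BX$ and is not itself Markov, so the ``classical $L^\infty$ small-ball lower bound for Gaussian Markov processes'' does not apply to it verbatim --- is harmless: the paper uses the inclusion $\{X^*-X_*<w\}\subseteq\{\min_{[a,b]}Z>-w\}$ and quotes the range small-ball asymptotics for $\BX$ itself from Li (2001).

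The genuine gap is in the upper bound, i.e.\ in your claimed estimate $F(w)=P(\min_{[a,b]}Z\ge -w)\le\exp(-c_1/w^2)$. Your reduction of this event to $\int_a^b|Z|\,dx\le Cw$ via \eqref{e:eta} is correct, and it is exactly where the density hypothesis enters the paper's proof as well. But the justification you offer for the resulting small-ball \emph{upper} bound --- ``a dyadic partition of $[a,b]$ into $O(1/w^2)$ subintervals and iteration of transition-kernel estimates'' --- is only a sketch, and as stated it applies Markov-chain machinery to $Z$, which is not Markov; this is the hard direction of the small-ball problem and is precisely the point that cannot be waved through. The paper avoids the issue by a Cameron--Martin change of measure with drift $\equiv -u$, which rewrites the prefactor as $E\bigl[\exp\{-uY/\sigma_*^2\}\one(\min_{[a,b]}X>0)\bigr]$; on that event $Y=\int|X|\,d\nu_*\ge\eta\int_a^b|X(t)|\,dt$, so the needed estimate is the $L^1(dt)$ small-ball upper bound for the \emph{Markov} process $\BX$ itself, quoted from Li (2001). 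Your route can also be repaired without the change of measure: since $\xi$ and $Z$ are independent and $\int_a^b|X|\,dx\le\int_a^b|Z|\,dx+(b-a)|\xi|$, one gets $P(\int_a^b|Z|\,dx\le Cw)\le P(\int_a^b|X|\,dx\le C'w)\big/P(|\xi|\le w)$, which costs only a factor $O(1/w)$ and again reduces everything to Li's bound for $\BX$. Either way, the missing ingredient is an appeal to (or an actual proof of) the $L^1$ small-ball upper bound for the Markov process $\BX$, not a transition-kernel iteration for $Z$.
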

\begin{proof}[Proof of Theorem \ref{t5}]
We will use the following easily checkable fact (which also follows
from  Theorem 4.12.11 (iii) of \cite{bingham:goldie:teugels:1987}): 
if $f:(0,\infty)\to(0,\infty)$ is a bounded measurable function such
that  
\begin{equation} \label{e:assume.f}
\lim_{\vep\downarrow0}\vep^{\beta}\log f(\vep)=-c\,,
\end{equation}
for some $\beta,c\in(0,\infty)$,
then there exists $C\in(0,\infty)$ such that 
\begin{equation} \label{e:conclude.f}
\lim_{x\to\infty}x^{-\beta/(1+\beta)}\log\int_0^\infty e^{-x\vep}f(\vep)d\vep=-C\,.
\end{equation}
 
Denote 
\begin{equation} \label{e:Y}
Y=\int_{[a,b]} X(t)\, \nu_*(dt)\,.
\end{equation} 
Since $\nu_*$ has full support, it follows that 
\[
\E(X(t)|Y)=Y\ \text{a.s. for all }t\in[a,b]\,;
\]
see e.g. p.8 in \cite{chakrabarty:samorodnitsky:2018}. With 
\[
Z(t):=X(t)-Y,\ t\in[a,b]\,,
\]
wee see that $Y$ and $\bigl(Z(t), \, t\in[a,b]\bigr)$ are
independent.  Since 
\[
\int_{[a,b]}Z(t)\,\nu_*(dt)=0\ \text{ a.s.}\,,
\]
it follows that 
\[
Z_*:=\min_{a\leq t\leq b}Z(t)\le0\ \text{ a.s.}\,.
\]
Therefore, for $u>0$,
\begin{align*}
&P\left(\min_{a\leq t\leq b}X(t)>u\right)
=P(Y+Z_*>u)\\
=&\int_u^\infty P(Z_*>u-y)\,P(Y\in dy)\\
=&\int_u^\infty P(Z_*>u-y)\frac1{\sigma_*(a,b)\sqrt{2\pi }}e^{-y^2/2\sigma_*^2(a,b)}\,dy\\
=&\frac1{\sigma_*(a,b)\sqrt{2\pi
   }}e^{-u^2/2\sigma_*^2(a,b)}\int_0^\infty
   e^{-u\vep/\sigma_*^2(a,b)}P(Z_*>-\vep)e^{-\vep^2/2\sigma_*^2(a,b)}\,d\vep\,.  
\end{align*}
We will prove that 
\begin{eqnarray}
\liminf_{\vep\downarrow0}\vep^2\log
  P(Z_*>-\vep)&>&-\infty\,,\label{gm.eq1} 
\end{eqnarray}
By \eqref{e:conclude.f}  with $\beta=2$ this will prove the lower
bound in the statement of the theorem.  However, if  $X_*$ and $X^*$
are the smallest and the largest values,
respectively,  of $\BX$ on $[a,b]$, then, as 
$\vep\downarrow0$, 
\begin{eqnarray*}
\log P(Z_*>-\vep)&\ge&\log P\left(X^*-X_*<\vep\right)
\sim-\kappa \vep^{-2}
\end{eqnarray*}
for some $\kappa\in(0,\infty)$. The asymptotic equivalence in the last
line has been shown in \cite{li:2001}. Thus, \eqref{gm.eq1} follows.

In order to prove the upper bound in the statement of the theorem, we
use a change of measure. Let ${\mathcal L}_\BX$ be the closed in $L^2$
linear span of the process $\BX$. For every $Z\in {\mathcal L}_\BX$,
the function $f_Z(t)= E\bigl( ZX(t), \, a\leq t\leq b\bigr)$ belongs
to the reproducing kernel Hilbert space of $\BX$ and, hence, the
probability measures $\bigl( X(t), \, a\leq t\leq b\bigr)$ and
$\bigl( X(t)+f_Z(t), \, a\leq t\leq b\bigr)$ generate on $\bbr^{[a,b]}$ are
equivalent. Furthermore, in the obvious notation, 
$$
\frac{dP^{\BX +{\bf f}_Z}}{dP^\BX} =\exp\left\{ Z-
  EZ^2/2\right\}\,; 
$$
see \cite{vandervaart:vanzanten:2008}. In particular, for every such
$Z$,
\begin{align} \label{e:RN.der}
  & P\bigl(\min_{a\leq t\leq b}X(t)+f_Z(t)>0\bigr) \\
 = & \exp\left\{ -  EZ^2/2\right\} E\left[ e^Z \one\bigl( \min_{a\leq
     t\leq b}X(t)>0 \bigr)\right] \,. \notag 
\end{align}
With $Y$ as in \eqref{e:Y} we choose $Z=-u Y/EY^2$. 
Since $\nu_*$ has a full support, we have $f_Z(t)=-u$ for all $a\leq
t\leq b$. By \eqref{e:RN.der},
\begin{align} \label{e:after.cm}
 P\bigl(&\min_{a\leq t\leq b}X(t)>u\bigr) 
 =  \exp\left\{ -  \frac1{2\sigma_*^2(a,b)}u^2\right\} \\
    & E\left[ \exp\left\{  -u\frac{1}{\sigma_*^2(a,b)}
      \int_{[a,b]}X(t)\, \nu_*(dt)\right\}  \one\bigl( \min_{a\leq
      t\leq b}X(t)>0\bigr)\right] \,. \notag 
 \end{align} 
Next,
\begin{align*}
& E\left[ \exp\left\{  -u\frac{1}{\sigma_*^2(a,b)}
      \int_{[a,b]}X(t)\, \nu_*(dt)\right\}  \one\bigl( \min_{a\leq
                 t\leq b}X(t)>0\bigr)\right] \\
  \leq &E\left[ \exp\left\{  -u\frac{1}{\sigma_*^2(a,b)}
         \int_{[a,b]}|X(t)|\, \nu_*(dt)\right\} \right] \\
  \leq & \exp\left\{  -u^{2/3}\frac{1}{\sigma_*^2(a,b)}\right\}
         + P\left( \int_{[a,b]}|X(t)|\, \nu_*(dt)\leq u^{-1/3}\right)\,.
\end{align*}
Appealing, once again, to \cite{li:2001}, we have, 
by \eqref{e:eta}\,,
\begin{align*}
  &\log P\left( \int_{[a,b]}|X(t)|\, \nu_*(dt)\leq u^{-1/3}\right) \\
  \leq & \log P\left( \eta \int_{[a,b]}|X(t)|\,  dt\leq u^{-1/3}\right) 
\sim-\kappa_1 u^{2/3}
\end{align*}
for some $\kappa_1\in(0,\infty)$. In conjunction with
\eqref{e:after.cm} this establishes the upper bound in the theorem. 
 \end{proof}

 It is clear from the proof of Theorem \ref{t5} that there is a close
 connection between the improvements on the logarithmic asymptotics
 \eqref{e:log.tail} of the minima of Gaussian processes and small
 ball problems for these processes. Availability of bounds on small
 ball probabilities is often helpful in obtaining bounds on the tail of
 the Gaussian minimum. The following theorem is another example of
 this.

 \begin{theorem}\label{t6}
Let $(X(t), \, a\le t\le b)$ be a centered Gaussian  process
with continuous sample paths, such that  an optimal measure $\nu_*$
in the optimization problem \eqref{e:optimize} has  a full support in
$[a,b]$. Suppose that there exists a function
$\sigma:[0,\infty)\to[0,\infty)$ satisfying
\begin{equation}
\label{eq5}\lim_{h\downarrow0}h^{-\beta}\sigma(h)=c\in(0,\infty)
\end{equation}
for some $\beta>0$, such that 
such that
\[
\E\left[(X(t)-X(s))^2\right]\leq \sigma(|t-s|)^2,\,s,t\in[a,b]\,.
\]
Then,
\[
\liminf_{u\to\infty}u^{-1/(\beta+1)}\left(\log
  P\left(\min_{t\in[a,b]}X(t)>u\right)
  +\frac1{2\sigma_*^2(a,b)}u^2\right)>-\infty\,,
\]
where $\sigma_*^2(a,b)$ is as in \eqref{e:optimize}, and should not be confused with the $\sigma$ of \eqref{eq5}.
\end{theorem}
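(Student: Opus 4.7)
The plan is to recycle the set-up at the start of the proof of Theorem~\ref{t5} — which relied only on the full support of $\nu_*$ — and to replace the sharp Markov small-ball input used there by the metric-entropy small-ball bound forced by the variance condition \eqref{eq5}. Concretely, since $\nu_*$ has full support the Gaussian variable $Y:=\int_{[a,b]}X(t)\,\nu_*(dt)\sim N(0,\sigma_*^2(a,b))$ satisfies $\E(X(t)|Y)=Y$ for every $t\in[a,b]$, hence with $Z(t):=X(t)-Y$ the processes $Y$ and $(Z(t), a\le t\le b)$ are independent, $Z_*:=\min_{t\in[a,b]}Z(t)\le 0$ a.s., and conditioning on $Y$ yields
\begin{equation*}
P\bigl(\min_{a\le t\le b}X(t)>u\bigr)=\frac{e^{-u^2/2\sigma_*^2(a,b)}}{\sigma_*(a,b)\sqrt{2\pi}}\int_0^\infty e^{-u\vep/\sigma_*^2(a,b)}P(Z_*>-\vep)e^{-\vep^2/2\sigma_*^2(a,b)}\,d\vep.
\end{equation*}
It remains to produce a lower bound on $P(Z_*>-\vep)$ and to pass it through the resulting Laplace integral.

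For the small-ball step, since $\nu_*$ is a probability measure one may write $Z(t)=\int_{[a,b]}(X(t)-X(s))\,\nu_*(ds)$, so
\begin{equation*}
\sup_{t\in[a,b]}|Z(t)|\le\sup_{s,t\in[a,b]}|X(t)-X(s)|\le 2\sup_{t\in[a,b]}|X(t)-X(a)|,
\end{equation*}
which gives $P(Z_*>-\vep)\ge P\bigl(\sup_{t}|X(t)-X(a)|<\vep/2\bigr)$. The hypothesis \eqref{eq5} implies that the intrinsic pseudo-metric $d_X(s,t)=\sqrt{\E[(X(t)-X(s))^2]}$ is bounded by $c'|t-s|^\beta$ for small $|t-s|$, so the covering number of $([a,b],d_X)$ grows at most as $\delta^{-1/\beta}$. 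The standard entropy-based small-ball lower bound for centered continuous Gaussian processes with polynomial intrinsic entropy (cf.\ the survey \cite{li:2001}) then furnishes constants $K,\delta_0\in(0,\infty)$ with $\log P(\sup_t|X(t)-X(a)|<\delta)\ge -K\delta^{-1/\beta}$ for $0<\delta<\delta_0$, and hence $\log P(Z_*>-\vep)\ge -K'\vep^{-1/\beta}$ for all small $\vep>0$.

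Finally I would apply the easy lower-bound half of the Bingham--Goldie--Teugels Laplace asymptotic recorded at the start of the proof of Theorem~\ref{t5}, now with exponent $1/\beta$ in place of $2$. Substituting $P(Z_*>-\vep)\ge\exp(-K'\vep^{-1/\beta})$ into the integral above and saddle-pointing at $\vep^*\asymp u^{-\beta/(\beta+1)}$ gives
\begin{equation*}
\log\int_0^\infty e^{-u\vep/\sigma_*^2(a,b)}P(Z_*>-\vep)e^{-\vep^2/2\sigma_*^2(a,b)}\,d\vep\ge -Cu^{1/(\beta+1)}
\end{equation*}
for $u$ large, and combining this with the explicit prefactor $e^{-u^2/2\sigma_*^2(a,b)}$ and dividing by $u^{1/(\beta+1)}$ delivers the claim. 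The main obstacle is the small-ball lower bound in the middle paragraph: unlike in Theorem~\ref{t5}, no Markov property is available, so one must invoke the general entropy-to-small-ball implication — classical in Gaussian theory but requiring some care because \eqref{eq5} controls the intrinsic metric only from one side and provides no matching lower bound.
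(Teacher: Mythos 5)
Your proposal is correct and follows essentially the same route as the paper: reuse the $Y$/$Z$ decomposition and Laplace-integral representation from the lower bound in Theorem~\ref{t5}, bound $P(Z_*>-\vep)$ below by $P(\sup_t|X(t)-X(a)|<\vep/2)$, invoke the entropy/variance small-ball lower bound $\exp(-K\vep^{-1/\beta})$ (the paper cites Theorem 4.1 of \cite{li:shao:2001} for exactly this, and indeed only the one-sided increment bound is needed there, so the concern in your last paragraph is not an obstacle), and finish with the Laplace asymptotic \eqref{e:conclude.f} with exponent $1/\beta$, giving $u^{1/(\beta+1)}$.
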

\begin{proof}
 An argument identical to the proof of the lower bound in Theorem
 \ref{t5} gives us 
\begin{eqnarray*}
&&P\left(\min_{a\leq t\leq b}X(t)>u\right)\\
&\ge&\frac1{\sigma_*(a,b)\sqrt{2\pi}}e^{-u^2/2\sigma_*^2(a,b)} \\
  &&\int_0^\infty e^{-u\vep/\sigma_*^2(a,b)}
     P\left(\max_{a\leq t\leq  b}|X(t)-X(a)|<\vep/2\right)
     e^{-\vep^2/2\sigma_*^2(a,b)}\,d\vep\,.
\end{eqnarray*}
Since by the assumption \eqref{eq5} we have, for some $K\in
(0,\infty)$,  
 \[
   P\left(\max_{a\leq t\leq b}|X(t)-X(a)|\le\vep\right)\ge
   \exp\left(-K\vep^{-1/\beta}\right),\,\vep>0\,,
\]
by Theorem 4.1 in \cite{li:shao:2001}, the claim of the theorem
follows from \eqref{e:conclude.f}. 
\end{proof}

\section{The location of the minimum}
\label{sec:location}

For a continuous centered Gaussian process $\BX =\bigl( X(t),\,
t\in\bbr\bigr)$ consider the location of the minimum of the process on
an interval $[a,b]$: 
\[
T_{*}:=\arg\min_{a\leq t\leq b}X(t)\,,
\]
where we choose the leftmost location of the minimum in case there are
ties. For very smooth Gaussian processes considered in
\cite{chakrabarty:samorodnitsky:2018} it was proved that, as
$u\to\infty$, 
\begin{equation} \label{e:argmin.conv}
\P\left(T_{*}\in\cdot\, \Bigr|\min_{a\leq t\leq
    b}X(t)>u\right)\Rightarrow  \nu_*\,,
\end{equation}
with $\nu_*$  the unique minimizer in the optimization problem
\eqref{e:optimize}. In that case the latter optimal measure is always
supported by a finite set. Our goal in this section is to show
that \eqref{e:argmin.conv} continues to hold for Gaussian processes
whose sample paths are not smooth, and for which the optimal measure
may have full support. 
\begin{theorem} \label{t:zhixin}
Let $\BX =\bigl( X(t),\, t\in\bbr\bigr)$ be a centered stationary Gaussian
process with continuous sample paths and covariance function
$R_\BX$. Suppose that  $R_\BX$ is  strictly convex on
$[0,b]$. Then \eqref{e:argmin.conv} holds with $a=0$ and any $b>0$,
where $\nu_*$ is the unique optimal probability measure for the 
the optimization problem \eqref{e:optimize}.  
\end{theorem}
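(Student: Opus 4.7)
The plan is to combine the Cameron-Martin change of measure used in the proof of Theorem \ref{t5} with the uniqueness assertion of Theorem \ref{t:convex.R}. Strict convexity of $R_\BX$ on $[0,b]$ guarantees, via Theorem \ref{t:convex.R}, that the optimal measure $\nu_*$ in \eqref{e:optimize} exists, is unique, and has full support in $[0,b]$. Writing $Y=\int_{[0,b]}X(t)\,\nu_*(dt)$ and $\sigma_*^2=\sigma_*^2(0,b)$, full support yields $\E[YX(t)]=\sigma_*^2$ for every $t\in[0,b]$, so the constant function $-u$ lies in the reproducing kernel Hilbert space of $\BX$ and is represented by $-uY/\sigma_*^2$. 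Applying the change-of-measure identity \eqref{e:RN.der} exactly as in the derivation of \eqref{e:after.cm}, and noting that $T_*$ is invariant under vertical shifts of the sample path, I obtain, for every Borel $A\subseteq[0,b]$,
\[
\P(T_*\in A,\,\min_{[0,b]}X>u)=e^{-u^2/(2\sigma_*^2)}\,\E\!\left[e^{-uY/\sigma_*^2}\one\!\left(\min X>0,\,T_*\in A\right)\right].
\]

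Next, I decompose $X(t)=Y+Z(t)$ as in the proof of Theorem \ref{t5}, so that $Y$ and $(Z(t),\,t\in[0,b])$ are independent, $\int Z\,d\nu_*=0$ almost surely, $T_*(X)=T_*(Z)$, and $\{\min X>0\}=\{Y>M\}$ with $M:=-\min_{[0,b]}Z\ge 0$. Integrating out the Gaussian variable $Y\sim N(0,\sigma_*^2)$ yields the exact identity
\[
\P(T_*\in A,\,\min X>u)=\E\!\left[\one(T_*(Z)\in A)\,\bar\Phi\!\left(\tfrac{M+u}{\sigma_*}\right)\right],
\]
with $\bar\Phi$ the standard normal tail, and therefore
\[
\P(T_*\in A\mid\min X>u)=\frac{\E\!\left[\one(T_*(Z)\in A)\,\bar\Phi\!\left(\tfrac{M+u}{\sigma_*}\right)\right]}{\E\!\left[\bar\Phi\!\left(\tfrac{M+u}{\sigma_*}\right)\right]}.
\]
By the Mills-ratio asymptotic $\bar\Phi((M+u)/\sigma_*)\sim\sigma_*(u\sqrt{2\pi})^{-1}e^{-u^2/(2\sigma_*^2)}e^{-uM/\sigma_*^2}$, uniform on $M=o(u)$, this ratio is asymptotic to a Laplace-type tilt whose weight $e^{-uM/\sigma_*^2}$ concentrates on $M$ of order $1/u$. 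The problem therefore reduces to understanding the small-ball conditional distribution of $T_*(Z)=\arg\min Z$ as $M\downarrow 0$.

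Finally, the family $\{\mu_u\}$ of conditional laws is tight on the compact interval $[0,b]$, so each subsequence admits a weak limit $\tilde\mu$. My plan is to show that every such $\tilde\mu$ is itself an optimal solution of the optimization problem \eqref{e:optimize}, upon which the uniqueness in Theorem \ref{t:convex.R} forces $\tilde\mu=\nu_*$. Via the variational characterization of optimality (Theorem 4.3 of \ams), this reduces to verifying that the function $t\mapsto\int R_\BX(t-s)\,\tilde\mu(ds)$ is $\tilde\mu$-almost everywhere equal to the constant $\int\!\int R_\BX(s-t)\,\tilde\mu(ds)\,\tilde\mu(dt)$. I expect this to be the main obstacle. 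Unlike the finitely-supported situation of \cite{chakrabarty:samorodnitsky:2018}, where one can localise near each atom of $\nu_*$, here $\nu_*$ has full support and the argmin can in principle appear anywhere in $[0,b]$. The essential mechanism should be the almost sure constraint $\int Z\,d\nu_*=0$, which in the regime $M\downarrow 0$ forces realisations of $Z$ to be close to $-M$ on a $\nu_*$-substantial set; together with the strict monotonicity of $R_\BX$ inherited from strict convexity and quantitative small-ball estimates in the spirit of \cite{li:2001,li:shao:2001}, this should rule out any deviation of $\tilde\mu$ from an optimiser of \eqref{e:optimize}.
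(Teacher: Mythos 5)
Your setup is sound and essentially parallels the paper's: the change of measure \eqref{e:after.cm}, the decomposition $X(t)=Y+Z(t)$ with $Y$ independent of $Z$, the observation that $T_*(X)=T_*(Z)$ and $\{\min X>u\}=\{Y>M+u\}$ with $M=-\min Z$, and the Laplace/Mills-ratio reduction to the conditional law of $T_*(Z)$ given that $M$ is small are all correct and match the paper's reduction to showing $m_x\Rightarrow\nu_*$ as $x\to0$. But the proof stops exactly where the theorem becomes hard. Your final step --- identifying every subsequential limit $\tilde\mu$ of these conditional argmin laws with $\nu_*$ --- is presented only as a plan (``I expect this to be the main obstacle,'' ``the essential mechanism should be\dots''), so there is a genuine gap. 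Worse, the route you propose for closing it is doubtful: you want to verify that $\tilde\mu$ satisfies the first-order optimality condition of Theorem 4.3 of \ams, i.e.\ that $t\mapsto\int R_\BX(t-s)\,\tilde\mu(ds)$ is constant $\tilde\mu$-a.e. There is no evident reason why the weak limit of laws of an $\arg\min$ should satisfy that variational identity; the identification with $\nu_*$ is not a consequence of optimality of the limit but of an explicit geometric computation.

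The missing idea in the paper is a dyadic discretization plus a volume computation. One restricts \eqref{e:optimize} to measures supported on the partitions $\cP_k$, shows $\nu_{*,k}\Rightarrow\nu_*$ and $\sigma_{*,k}^2\to\sigma_*^2$, and then computes, for the Gaussian vector $\BX^{(k)}$, the conditional probability that the minimum over $\cP_k$ occurs at the $j$th grid point given $\{Y_k\le x,\ \min_{\cP_k}X>0\}$. This event corresponds to $\BX^{(k)}$ lying in the region $E_j(x)$ where coordinate $j$ is smallest, all coordinates are positive, and $\sum_i\nu_{*,k}(\{bi2^{-k}\})z_i\le x$; the Lebesgue volume of $E_j(x)$ is exactly proportional to $\nu_{*,k}(\{bj2^{-k}\})$, and the Gaussian density is shown to be uniformly close to a constant on $\cup_jE_j(x)$ as $x\to0$ (uniformly in $k$, using $\sigma_{*,k}^2\to\sigma_*^2>0$). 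That is what forces the limiting argmin law to be $\nu_{*,k}$, and hence, after letting $k\to\infty$, to be $\nu_*$. If you want to salvage your continuous-parameter approach you would need a substitute for this simplex-volume argument valid for the infinite-dimensional law of $(T_*(Z),M)$ near $M=0$; the small-ball estimates of \cite{li:2001} and \cite{li:shao:2001} that you invoke give the order of magnitude of $P(M<\vep)$ but say nothing about how that probability splits according to the location of the minimum, which is precisely what the theorem is about.
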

\begin{proof}
The fact that the optimization problem \eqref{e:optimize} has a
unique optimal solution $\nu_*$ was established in Theorem
\ref{t:convex.R}. We use \eqref{e:after.cm} (with $a=0$). Let
$A\subseteq [a,b]$ be a Borel set that is a continuity set for
$\nu_*$. Recalling the notation \eqref{e:Y} we obtain
\begin{align*} 
  & P\bigl( T_*\in A|\min_{a\leq t\leq b}X(t)>u\bigr) \\
  =
&\frac{E\left[ \exp\left\{  -u Y/\sigma_*^2(a,b)
      \right\}  \one\bigl( \min_{a\leq
      t\leq b}X(t)>0, \, T_*\in A\bigr)\right]}
{E\left[ \exp\left\{  -u Y/\sigma_*^2(a,b)
      \right\}  \one\bigl( \min_{a\leq
      t\leq b}X(t)>0\bigr)\right]}\,.
\end{align*}
By Fubini's theorem this can be rewritten in the form
\begin{align*} 
  & P\bigl( T_*\in A|\min_{a\leq t\leq b}X(t)>u\bigr) \\
  =
&\frac{\int_0^\infty \exp\left\{  -u x/\sigma_*^2(a,b)
      \right\}  P\bigl( Y\leq x,\, \min_{a\leq
      t\leq b}X(t)>0, \, T_*\in A\bigr)\, dx}
{\int_0^\infty \exp\left\{  -u x/\sigma_*^2(a,b)
      \right\}  P\bigl( Y\leq x,\, \min_{a\leq
      t\leq b}X(t)>0\bigr) \, dx}\,, 
\end{align*}
and so it is enough to prove that
\begin{align*} 
\nu_*(A) = &\lim_{x\to 0}\frac{P\bigl( Y\leq x,\, \min_{a\leq
      t\leq b}X(t)>0, \, T_*\in A\bigr)}{P\bigl( Y\leq x,\, \min_{a\leq
      t\leq b}X(t)>0\bigr)} \\
  = &\lim_{x\to 0} P\bigl(T_*\in A\big| Y\leq x,\, \min_{a\leq
    t\leq b}X(t)>0\bigr)\,. 
\end{align*}
If we denote by $m_x$ the probability measure described by the right
hand side of this statement, then we need to prove that
\begin{equation} \label{e:dual.conv}
  m_x\Rightarrow \nu_* \ \text{as $x\to 0$.}
  \end{equation}

To this end, we use a discrete approximation. Let $\cP_k=\bigl\{
bi2^{-k}, \, i=0,1,\ldots, 2^{k}\bigr\}$ be the $k$th binary partition
of the interval $[0,b]$, $k=1,2,\ldots$. For each $k$ we consider the  following
restricted version of the optimization problem \eqref{e:optimize}: 
\begin{equation} \label{e:optimize.k}
  \min_{\nu\in M_1(\cP_k)}\int_{[0,b]}\int_{[0,b]}R_\BX(s,t)\,\nu(ds)\,\nu(dt)\,,
\end{equation}
where the probability measures are required to be supported by the
finite set $\cP_k$. As in the case of the full optimization problem
\eqref{e:optimize}, the fact that the spectral measure of the process
$\BX$ is of full support guarantees that the problem
\eqref{e:optimize.k} has a unique optimal solution, which we will
denote by $\nu_{*,k}$. We also denote by $\sigma_{*,k}^2$ the
corresponding value of the double integral. 
The same argument as in the case of the
restricted optimization problem shows that, because of strict
convexity of $R_\BX$, $\nu_{*,k}$ assigns a positive mass to each
point in $\cP_k$. 

Clearly, $\sigma_{*,1}^2\geq \sigma_{*,2}^2\geq \ldots \geq
\sigma_{*}^2[0,b]$. On the other hand, the obvious discretizations of
the measure $\nu_*$ produce a sequence of probability measures
$\nu_k^\prime\in \cP_k$, $k=1,2,\ldots$ such that
$\nu_k^\prime\Rightarrow \nu_*$ as $k\to\infty$. By continuity,
$$
\int_{[0,b]}\int_{[0,b]}R_\BX(s,t)\,\nu_k^\prime(ds)\,\nu_k^\prime(dt)
\to \int_{[0,b]}\int_{[0,b]}R_\BX(s,t)\,\nu_*(ds)\,\nu_*(dt)
$$
as $k\to\infty$, so by the optimality of the measures $(\nu_{*,k})$
we conclude that $\sigma_{*,k}^2\to \sigma_{*}^2[0,b]$. We claim that
$\nu_{*,k}\Rightarrow \nu_*$. Since the space $M_1[0,b]$ is weakly
compact, it is enough to prove that every subsequential limit of the
sequence $(\nu_{*,k})$ is equal to $\nu_*$. However, for every
subsequence of of the sequence $(\nu_{*,k})$ the value of the double
integral in the optimization problem \eqref{e:optimize} converges to
$\sigma_{*}^2[0,b]$ and, by weak continuity of the double integral, it
also converges to the double integral with respect to the
subsequential limit. Since under the assumptions of the theorem the
optimization problem \eqref{e:optimize} has a unique optimal solution,
we conclude that  every subsequential limit of the
sequence $(\nu_{*,k})$ is equal to $\nu_*$.

Define, analogously to \eqref{e:Y},
$$
Y_k=\int_{[a,b]} X(t)\, \nu_{*,k}(dt)\,, 
$$
and let 
\[
T_{*,k}:=\arg\min_{t\in\cP_k}X(t) \, k=1,2,\ldots\,,
\]
once again choosing the leftmost location in the case of a tie. 
For each $k$ we define a probability measure on $[a,b]$ by 
$$
m_{x,k}(A)=  P\bigl(T_{*,k}\in A\big| Y_k\leq x,\,
\min_{t\in\cP_k}X(t)>0\bigr),  \ \text{$A$ Borel.}
$$
It is clear that $T_{*,k}\to T_{*}$
and $\min_{t\in\cP_k}X(t)\to \min_{a\leq t\leq b}X(t)$
a.s. Furthermore, $Y_k\to Y$ in $L^2$. Furthermore, the distribution of
$\min_{a\leq t\leq b}X(t)$ is atomless (see Lemma 1 in \cite{ylvisaker:1965}). We
conclude that, for each fixed $x>0$, $m_{x,k}\Rightarrow m_x$ as
$k\to\infty$. It follows that the claim \eqref{e:dual.conv} will
follow if we prove that
\begin{equation} \label{e:dual.conv.k}
  m_{x,k}\Rightarrow \nu_{*,k} \ \text{uniformly in $k$ as $x\to 0$.} 
  \end{equation}

Consider the zero mean Gaussian  random vector  $\BX^{(k)}=\bigl(
X(bi2^{-k}), \, 
i=0,1, \ldots, 2^k\bigr)$. Let $\Sigma_k$ denote its covariance
matrix. The uniqueness of the minimizing measure $\nu_{*,k}$ implies
that the vector $\BX^{(k)}$ has full support, so $\Sigma_k$ is
invertible. For any $j=0,1, \ldots, 2^k$ we can write
\begin{align} \label{e:two.int}
m_{x,k}\bigl(\{bj2^{-k}\}\bigr)=&\frac{P\bigl(\BX^{(k)}\in
  E_j(x)\bigr)}{\sum_{i=0}^{2^k} 
                                  P\bigl(\BX^{(k)}\in E_i(x)\bigr)}\\
  =& \frac{\int_{E_j(x)} \exp\bigl\{ -\bz^T\Sigma_k^{-1}\bz/2\bigr\}\, d\bz}
   {\sum_{i=0}^{2^k}\int_{E_i(x)} \exp\bigl\{
     -\bz^T\Sigma_k^{-1}\bz/2\bigr\}\, d\bz}\,, \notag 
\end{align}
where
$$
E_j(x) =\bigl\{ \bz\in (0,\infty)^{2^k+1}, \,  z_j<z_i, \, i\not= j, \,
\sum_{i=0}^{2^k+1} \nu_{*,k}\bigl(\{bi2^{-k}\}\bigr) z_i\leq
x\bigr\}\,,
$$
$j=0,1,\ldots, 2^k+1$. It is straightforward to compute that
$$
\int_{E_j(x)}   d\bz = \frac{x^{2^k+1}}{(2^k+1)!}
\frac{\nu_{*,k}\bigl(\{bj2^{-k}\}\bigr)}{\prod_{i=0}^{2^k+1}
  \nu_{*,k}\bigl(\{bi2^{-k}\}\bigr)}
  $$
  Therefore, if we prove that
  \begin{equation} \label{e:unif.quad}
    \bz^T\Sigma_k^{-1}\bz \to 0 \ \text{as $x\to 0$}
  \end{equation}
  uniformly on $\cup_{i=0}^{2^k+1}E_i(x)$, then we obtain uniform
  convergence in \eqref{e:dual.conv.k} (even in total variation). 

  To this end, let $\bw=\Sigma_k^{-1}\bz$, so that
  $$
  \bz^T\Sigma_k^{-1}\bz=\bw^T\Sigma_k\bw\,.
  $$
  Let $\btheta=\Sigma^{-1}\one$. The vector $\btheta$ is equal, up to
  a multiplicative scale, to the probability vector of the measure
  $\nu_{*,k}$; see \cite{chakrabarty:samorodnitsky:2018}. Therefore,
  \begin{align*}
    \|\btheta\|_1 = \one^T \btheta = \one^T \Sigma_k^{-1} \one = 
 \btheta^T\Sigma_k\btheta 
    = (\|\btheta\|_1)^2\sigma^2_{*,k}\,,
  \end{align*}
  so that
  $$
  \|\btheta\|_1  = \frac{1}{\sigma^2_{*,k}}\,.
  $$
      In particular,
      $$
      \bw^T\one = \bz^T\btheta \leq \|\btheta\|_1 x
      = \frac{x}{\sigma_{*,k}^2}
      $$
      on  $\cup_{i=0}^{2^k+1}E_i(x)$. We conclude that
      $$
      \bw^T\Sigma_k\bw \leq R_X(0) \bigl( \bw^T\one \bigr)^2
      = R(0) \frac{x^2}{\sigma_{*,k}^4}\,.
      $$
  Since $ \sigma_{*,k}^2\to  \sigma^2_*[0,b]>0$, for all $k$ large
  enough we have $\sigma_{*,k}^2\geq  \sigma^2_*[0,b]/2$, and we have
  obtained the desired uniform convergence, thus completing the
  proof. 
\end{proof}

\section{Examples} \label{sec:examples} 
In this section, the results in Sections \ref{sec:support} - \ref{sec:location} are applied to two examples. The first example illustrates applications of Theorems \ref{t:BM.mod} and \ref{t5}.

\begin{example}{\rm
Let $(B(t):t\ge0)$ be a standard Brownian motion, and fix $0<\alpha<1$. Define
\[
X(t)=t^{-\alpha}B(t),\,t>0\,.
\]
Fix $0<a<b<\infty$, and set
\[
X_*=\min_{t\in[a,b]}X(t)\,.
\]
Theorem \ref{t:BM.mod} implies that the finite measure $\mu$ on $[a,b]$ defined by
\[
\mu_\alpha(dx)=\alpha(1-\alpha)x^{2\alpha-2}\,dx+(1-\alpha)a^{2\alpha-1}\delta_a(dx)+\alpha b^{2\alpha-1}\delta_b(dx)\,,
\]
is a constant multiple of the optimal measure, that is, the solution to the optimization problem \eqref{e:optimize}. Let
 \[
 \sigma_*^2(a,b;\alpha)=\mu_\alpha([a,b])^{-2}\Var\left(\int_a^bX(t)\mu_\alpha(dt)\right)\,.
 \]
As the Radon-Nykodym derivative of the absolutely continuous component of $\mu_\alpha$ with respect to the Lebesgue measure is bounded away from $0$ on $[a,b]$, the hypotheses of Theorem \ref{t5} are clearly satisfied, which implies that
\[
\liminf_{u\to\infty}u^{-2/3}\left(\log P(X_*>u)+\frac1{2\sigma_*^2(a,b;\alpha)}u^2\right)>-\infty\,,
\]
and
\[
\limsup_{u\to\infty}u^{-2/3}\left(\log P(X_*>u)+\frac1{2\sigma_*^2(a,b;\alpha)}u^2\right)<0\,.
\]
In other words, as $u\to\infty$,
\begin{equation}
\label{eg1.eq1}P(X_*>u)=\exp\left(-\frac1{2\sigma_*^2(a,b;\alpha)}u^2-u^{\frac23+O(1/\log u)}\right)\,.
\end{equation}

When $\alpha=1/2$, $X(t)$ is a time-changed Ornstein-Uhlenbeck process. That is, 
\[
\left(X(e^{2t}):t\in\bbr\right)\eid\left(Z_t:t\in\bbr\right)\,,
\]
the process on the right hand side being an Ornstein-Uhlenbeck process. Therefore, a special case of \eqref{eg1.eq1} is that for any compact interval $[a,b]\subset\bbr$,
\begin{equation}
\label{eg1.eq2}P\left(\min_{t\in[a,b]}Z_t>u\right)=\exp\left(-\frac1{2\sigma_*^2(e^{2a},e^{2b};1/2)}u^2-u^{\frac23+O(1/\log u)}\right)\,,
\end{equation}
as $u\to\infty$.
}
\end{example}

The second example illustrates applications of Theorems \ref{t:convex.R}, \ref{t6} and \ref{t:zhixin}. 

\begin{example}
  {\rm
    Let $(X(t):t\in\bbr)$ be a stationary Gaussian process with mean zero and covariance function
    \[
    R_\BX(t)
    = \exp\bigl\{ -|t|^{\alpha}\bigr\},\,t\in\bbr\,,
    \]
for a fixed  $0<\alpha\leq 1$. The assumptions of Theorem
\ref{t:convex.R} are satisfied for any $b>0$ and, hence, the optimal
measure, say $\nu_*$, in the
optimization problem \eqref{e:optimize}  is of full support. If
$\alpha=1$, this follows from the explicit solution of the optimization
problem in \cite{adler:moldavskaya:samorodnitsky:2014}. 

The hypotheses of Theorem \ref{t6} are therefore satisfied with $\beta=\alpha/2$ and $c=\sqrt2$, which implies the existence of $C\in(0,\infty)$ satisfying
\[
\log P\left(\min_{t\in[a,b]}X(t)>u\right)\ge-\frac1{2\sigma_*^2(a,b)}u^2-Cu^{-2/(\alpha+2)}\,,
\]
for large $u$. When $\alpha=1$ this reduces to the upper bound in
\eqref{eg1.eq2}. 

Finally, an appeal to Theorem \ref{t:zhixin} shows that the conditional law of the location of the minimum (the leftmost one to be chosen in case of ties) on $[a,b]$ given that the minimum if above $u$, converges weakly to $\nu_*$ as $u\to\infty$.
  }
  \end{example}

\end{document}